\numberwithin{subsection}{section}
\newtheorem*{namedtheorem}{\theoremname} \newcommand{\theoremname}{testing}
\newtheorem{theorem}{Theorem}
\newtheorem{proposition}[theorem]{Proposition}
\newtheorem{proposition-definition}[theorem] {Proposition-Definition}
\newtheorem{corollary}[theorem]{Corollary} 
\newtheorem{lemma}[theorem]{Lemma}
\theoremstyle{definition}
\newcommand\Gal{\operatorname{Gal}}
\newcommand\Char{\operatorname{char}}
\DeclareMathOperator{\Aut}{Aut}
\newcommand{\GL}{\mathrm{GL}}
\newcommand\FF{\mathbb{F}}
 \newcommand\PP{\mathbb{P}}
\newcommand\ZZ{\mathbb{Z}}   
\newcommand\bbZ{\mathbb{Z}}  \newcommand\bbF{\mathbb{F}}
\newcommand\arr{\ifinner\to\else\longrightarrow\fi}
\newcommand\arrto{\ifinner\mapsto\else\longmapsto\fi}
\def\displaytimes_#1{\mathrel{\mathop{\times}\limits_{#1}}}
\def\displayotimes_#1{\mathrel{\mathop{\bigotimes}\limits_{#1}}}
\newcommand\Spec{\operatorname{Spec}}
\newlength{\ignora}
\newcommand{\trdeg}{\operatorname{trdeg}}
\newcommand{\rdim}{\operatorname{rdim}}
\newcommand{\ed}{\operatorname{ed}}
\newcommand{\bbP}{\mathbb{P}}
\begin{document}
\title[Essential dimension in prime characteristic]
{Essential dimension of finite groups\\in prime characteristic}

\author[Reichstein]{Zinovy Reichstein$^\dagger$}

\author[Vistoli]{Angelo~Vistoli$^\ddagger$}

\address[Reichstein]{Department of Mathematics\\ University of British
Columbia \\ Vancouver, B.C., Canada V6T 1Z2}
\email{reichst@math.ubc.ca}

\address[Vistoli]{Scuola Normale Superiore\\ Piazza dei Cavalieri 7\\ 56126
Pisa\\ Italy}
\email{angelo.vistoli@sns.it}

\begin{abstract} 
Let $F$ be a field of characteristic $p > 0$ and $G$ be a smooth finite algebraic group over $F$.
We compute the essential dimension $\ed_F(G; p)$ of $G$ at $p$. That is, we show that 
\[ \ed_{F}(G; p) = \begin{cases} \text{$1$, if $p$ divides $|G|$, and} \\
\text{$0$, otherwise.} \end{cases} \]
\end{abstract}

\subjclass[2010]{20G15, 14G17}

\keywords{Essential dimension, versal torsor, Nottingham group, reduction of structure, Serre's Conjecture I}

\thanks{$^\dagger$Partially supported by
National Sciences and Engineering Research Council of
Canada Discovery grant 253424-2017.}
\thanks{$^\ddagger$Partially supported by research funds from 
the Scuola Normale Superiore.}

\thanks{
The authors are grateful to the Collaborative Research Group in Geometric and Cohomological Methods in Algebra at the Pacific Institute for the Mathematical Sciences for their support of this project.}

\maketitle

 \vspace{-1cm}

\begin{otherlanguage}{french}
\begin{abstract} 
Soit F un corps de caract\'eristique $p > 0$, et soit $G$ un groupe 
alg\'ebrique fini \'etale sur $F$.  On calcule la dimension essentielle 
de $G$ en $p$, que l'on note $\ed_F(G;p)$.  Plus pr\'ecis\'ement, on 
d\'emontre que
\[ \ed_{F}(G; p) = \begin{cases} \text{$1$, si $p$ divise $|G|$,} \\
\text{$0$, sinon.} \end{cases} \]
\end{abstract}
\end{otherlanguage}

\section{Introduction}

Let $F$ be a field and $G$ be an algebraic group over $F$. We begin by recalling the definition of the essential
dimension of $G$.

Let $K$ be a field containing $F$ and $\tau \colon T \to \Spec(K)$
be a $G$-torsor. We will say that $\tau$ descends to an intermediate subfield $F \subset K_0 \subset K$ if
$\tau$ is the pull-back of some $G$-torsor $\tau_0 \colon T_0 \to \Spec(K_0)$, i.e., if there exists a Cartesian diagram 
of the form
\[ \xymatrix{ T \ar@{->}[r] \ar@{->}[d]^{\tau} & T_0 \ar@{->}[d]^{\tau_0} &  \\
\Spec(K) \ar@{->}[r]  & \Spec(K_0) \ar@{->}[r] & \Spec(F). } \]
The essential dimension of $\tau$, denoted by $\ed_F(\tau)$, is the smallest value of the transcendence degree $\trdeg(K_0/F)$
such that $\tau$ descends to $K_0$. The essential dimension of $G$, denoted by $\ed_F(G)$, is the maximal value of
$\ed_F(\tau)$, as $K$ ranges over all fields containing $F$ and $\tau$ ranges over all $G$-torsors $T \to \Spec(K)$.

Now let $p$ be a prime integer.  A field $K$ is 
called $p$-closed if the degree of every finite extension $L/K$ is 
a power of $p$. Equivalently, $\Gal(K^s/K)$  is a pro-$p$-group, where $K^s$ 
is a separable closure of $K$. For example, the field of real numbers 
is $2$-closed.
The essential dimension $\ed_F(G; p)$ of $G$ at $p$ is the maximal value of 
$\ed_F(\tau)$, where $K$ ranges over $p$-closed fields $K$ containing $F$, and
$\tau$ ranges over the $G$-torsors $T \to \Spec(K)$.
For an overview of the theory of essential dimension, 
we refer the reader to the surveys~\cite{icm} and \cite{merkurjev13}. 

The case where $G$ is a finite group
(viewed as a constant group over $F$) is of particular interest. 
A theorem of N.~A.~Karpenko and A.~S.~Merkurjev~\cite{km} asserts that in this case
\begin{equation} \label{e.km}
\ed_{F}(G; p) = \ed_{F}(G_{p}; p) = \ed_{F}(G_p) = \rdim_{F}(G_p)  \, ,
\end{equation}
provided that $F$ contains a primitive $p$-th root of unity $\zeta_p$.
Here $G_p$ is any Sylow $p$-subgroup of $G$, and
$\rdim_{F}(G_p)$ denotes 
the minimal dimension of a faithful representation of $G_p$ defined over $F$.
For example, assuming that $\zeta_p \in F$, 
$\ed_F (G) = \ed(G; p) = r$ if $G = (\bbZ/ p \bbZ)^r$, and
$\ed(G) = \ed(G; p) = p$ if $G$ is a non-abelian
group of order $p^3$.
Further examples can be found in~\cite{mr10}.

Little is known about essential 
dimension of finite groups over a field $F$ of characteristic $p > 0$.
A.~Ledet~\cite{ledet-p} 
conjectured that 
\begin{equation} \label{e.ledet-conjecture}
\ed_F(\bbZ/p^r \bbZ) = r
\end{equation}
for every $r \geqslant 1$. This conjecture 
remains open for every $r \geqslant 3$. 
In this paper we will prove the following surprising result.  

\begin{theorem} \label{thm.main}
Let $F$ be a field of characteristic $p > 0$ and $G$ be a smooth finite algebraic group over $F$. Then 
\[ \ed_{F}(G; p) = \begin{cases} \text{$1$, if $p$ divides $|G|$, and} \\
\text{$0$, otherwise.} \end{cases}
\]
\end{theorem}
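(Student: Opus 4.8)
The lower bound is the easy half: if $p \mid |G|$, then $G$ has an element of order $p$, hence a closed subgroup isomorphic to $\bbZ/p\bbZ$, and one checks that $\ed_F(\bbZ/p\bbZ; p) \geqslant 1$ (equivalently, there is a versal $\bbZ/p\bbZ$-torsor that does not descend to an algebraic extension of $F$; for instance the Artin--Schreier torsor over $F(t)$, which is nontrivial and cannot be defined over $\overline{F}$ since $\overline F$ is perfect of characteristic $p$ and carries no nontrivial $\bbZ/p\bbZ$-torsors). Pushing a nonzero class forward along $\bbZ/p\bbZ \into G$ shows $\ed_F(G; p) \geqslant 1$; and $\ed_F(G; p) \geqslant 0$ always, so the "otherwise" case needs no lower bound.

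The content is the upper bound $\ed_F(G; p) \leqslant 1$ when $p \mid |G|$ (and $\leqslant 0$, i.e.\ $G$-torsors over $p$-closed fields are trivial, when $p \nmid |G|$ — but that last statement is essentially Serre's Conjecture~I / the fact that over a $p$-closed field the prime-to-$p$ part of Galois cohomology is controlled; more precisely a finite étale group of order prime to $p$ has no nontrivial torsors over a field whose absolute Galois group is pro-$p$). The key idea for $p \mid |G|$, signalled by the keywords "Nottingham group" and "reduction of structure," is this: over a $p$-closed field $K \supseteq F$, take a $G$-torsor $\tau\colon T \to \Spec K$. One wants to show $\tau$ is defined over a subfield of transcendence degree $\leqslant 1$. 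The plan is to embed $G$ into a larger group for which a versal torsor is visibly $1$-dimensional, or rather, to realize every $G$-torsor over a $p$-closed field as pulled back from a curve. Concretely: a $G$-torsor over $K$ corresponds to a continuous homomorphism $\Gal(K^s/K) \to G(K^s) = G(\overline F)$; since $\Gal(K^s/K)$ is pro-$p$, the image lands in a Sylow $p$-subgroup $P$ of $G$. The crucial input is that $P$ — a finite $p$-group — embeds into the Nottingham group $\mathcal N(\bbF_p)$ (the group of wild automorphisms of $\bbF_p[[t]]$, i.e.\ automorphisms congruent to the identity mod $t^2$), by a theorem of Camina; equivalently $P$ embeds into $\Aut_{\bbF_p}(\bbF_p((t)))$. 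This exhibits the $P$-torsor, hence the $G$-torsor, as arising from an extension of the field $\bbF_p((t))$ — or better, from a Galois cover of a curve over $F$ — and such a torsor descends to a field of transcendence degree $1$: the function field of that curve, or of a suitable one-parameter family.

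The main obstacle, and where the real work lies, is making the passage "abstract homomorphism $\Gal(K^s/K)\to P \into \mathcal N(\bbF_p)$" into an honest statement about descent of torsors to a $1$-dimensional base over the \emph{given} field $F$. One must produce, functorially in the $p$-closed field $K$, a smooth affine curve (or a versal object) $C/F$ together with a $G$-action such that $C \to C/G$ is a $G$-torsor over the generic point and every $G$-torsor over every $p$-closed $K \supseteq F$ is a specialization/pullback of it; the Nottingham-group embedding is what guarantees that a single one-parameter family suffices, uniformly in the $p$-group $P$, because $\mathcal N(\bbF_p)$ acts on the $1$-dimensional object $\Spec \bbF_p[[t]]$. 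Technical care is needed on two fronts: (i) that the embedding $P \into \mathcal N(\bbF_p)$ can be spread out to an action on an actual $F$-curve giving an $F$-form of the torsor (not merely over $\bbF_p$ or $\overline F$), using that $G$ is constant / smooth so $G(K^s) = G(\overline F)$; and (ii) handling the non-$p$ part — reducing from $G$ to its Sylow $p$-subgroup $P$ over the $p$-closed field, which is exactly where "$\ed_F(G;p) = \ed_F(P;p)$" and the triviality of prime-to-$p$ torsors over pro-$p$ fields get used. Granting these, the transcendence degree of the base curve is $1$, giving $\ed_F(\tau) \leqslant 1$ for every torsor $\tau$, hence $\ed_F(G;p) \leqslant 1$, and combined with the lower bound the theorem follows.
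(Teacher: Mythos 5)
Your outline gets the answer, the lower bound, the reduction to a Sylow $p$-subgroup, and the relevance of the Nottingham group right, but it has a genuine gap exactly at the point you yourself flag as ``where the real work lies,'' and that gap is not a technicality --- it is the central idea of the argument. A $G$-torsor over an arbitrary $p$-closed field $K$ (which may have enormous transcendence degree over $F$) has no a priori relation to $\FF_p[[t]]$ or to any particular curve; the embedding of a $p$-group $P$ into the Nottingham group produces \emph{one} curve $X$ with a faithful $P$-action (and even this globalization from an action on $\Spec\FF_p[[t]]$ to an action on an honest smooth curve is a nontrivial input, the Harbater--Katz--Gabber / Katz local-to-global theorem), and one must then prove that this single curve is $p$-versal, i.e.\ that \emph{every} torsor over \emph{every} $p$-closed field is pulled back from it. Your proposal asserts this (``every $G$-torsor \ldots is a specialization/pullback of it'') without a mechanism. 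The mechanism in the paper is: (1) the curve can be chosen with a smooth $G$-\emph{fixed} rational point, and a $G$-variety with a $G$-fixed point is weakly versal for essentially trivial reasons (any torsor $T\to\Spec E$ maps $G$-equivariantly to the fixed point); (2) by a theorem of Duncan--Reichstein, a smooth geometrically irreducible weakly $p$-versal $G$-variety is automatically $p$-versal, whence $\ed(G;p)\leqslant\dim X=1$. The fixed point appears nowhere in your sketch, and step (2) is a substantive theorem (it is precisely the step that fails for ordinary versality, which is why the theorem is false for $\ed(G)$); without both, the argument does not close.

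Two smaller issues. For non-constant smooth $G={}^{\tau}\Gamma$, a torsor is a cocycle rather than a homomorphism $\Gal(K^s/K)\to G(K^s)$, so the ``image lands in a Sylow $p$-subgroup'' reduction does not apply directly; the paper handles this by a separate twisting lemma bounding $\ed_F({}^{\tau}\Gamma;p)$ by $\ed_F(\Gamma\rtimes\Aut(\Gamma);p)$, reducing to the constant case. And your lower-bound sketch (pushing forward a nontrivial Artin--Schreier class along $\ZZ/p\ZZ\into G$) needs an argument that the resulting $G$-torsor still fails to descend to transcendence degree $0$; this is true but is not immediate from nontriviality of the $\ZZ/p\ZZ$-class, which is why the paper cites it rather than deriving it in one line.
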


In particular, 
Ledet's conjecture~\eqref{e.ledet-conjecture} fails dramatically
if essential dimension is replaced by essential dimension at $p$.
On the other hand, Theorem~\ref{thm.main} fails if $\ed(G; p)$ is replaced by $\ed(G)$; see~\cite{ledet1}.

Before proceeding with the proof of Theorem~\ref{thm.main}, we remark that the condition that $G$ is smooth cannot be dropped. 
Indeed, it is well known that $\ed_F(\mu_p^r; p) = r$ for any $r \geqslant 0$.
More generally, if $G$ is a group scheme of finite type
over a field $F$ of characteristic $p$ (not necessarily finite or smooth), 
then $\ed_F(G; p) \geqslant \dim (\mathcal{G}) - \dim (G)$, where $\mathcal{G}$ 
is the Lie algebra of $G$; see~\cite[Theorem 1.2]{tv}.

\smallskip
{\bf Acknowledgments:}  We are grateful to the referee for a thorough 
reading of the paper and numerous constructive suggestions, including 
an alternate proof of Lemma~\ref{lem.curve}. We would also like to thank 
D. Tossici and J.-P.~Serre for helpful comments.

\section{Versality}
\label{sect.versality}

Let $G$ be an algebraic group and $X$ be an irreducible $G$-variety (i.e., a variety with a $G$-action) over $F$.
We will say that the $G$-variety $X$ is \emph{generically free} if there exists a dense open subvariety $U$ of $X$ such that
the scheme-theoretic stabilizer $G_u$ of every geometric point $u$ of $X$ is trivial. Equivalently, there exists a $G$-invariant dense open subvariety $U'$
of $X$, which is the total space of a $G$-torsor; see~\cite[Section 5]{gms}.

Following~\cite[Section 5]{gms} and~\cite[Section 1]{dr}, we will say that $X$ is
{\em weakly versal} (respectively, {\em weakly $p$-versal}), if for every infinite field (respectively, every $p$-closed field) $E$,
and every $G$-torsor $T \to \Spec(E)$ there is a $G$-equivariant $F$-morphism $T \to X$. We will say that $X$
{\em versal} (respectively, $p$-versal), if every $G$-invariant dense open subvariety of $X$ is weakly versal (respectively, weakly $p$-versal). 

It readily follows from these definitions that $\ed(G)$ (respectively, $\ed(G; p)$) is the minimal dimension $\dim(X) - \dim(G)$, 
where the minimum is taken over all versal (respectively $p$-versal) generically free $G$-varieties $X$; see~\cite[Section 5.7]{gms}, 
\cite[Remark 2.6 and Section 8]{dr}. Our proof of Theorem~\ref{thm.main} will be based on the the following facts. 

\smallskip
(i) (\cite[Proposition 2.2]{dr}) Every $G$-variety $X$ with a $G$-fixed $F$-point is weakly versal.

\smallskip
(ii) (\cite[Theorem 8.3]{dr}) Let $X$ be a smooth geometrically irreducible $G$-variety. Then $X$ is weakly $p$-versal if and only if $X$ is $p$-versal.

\smallskip
\noindent
Combining (i) and (ii), we obtain:

\begin{proposition} {\rm (\cite[Corollary 8.6(b)]{dr})} 
\label{prop8.6}
Let $G$ be a finite smooth algebraic group over $F$.
If there exists a faithful geometrically irreducible $G$-variety $X$ with a smooth $G$-fixed $F$-point, then
$\ed(G; p) \leqslant \dim(X)$.
\end{proposition}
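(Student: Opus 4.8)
The plan is to produce a single $p$-versal generically free $G$-variety of dimension $\dim(X)$ and then invoke the characterization of $\ed(G;p)$ recalled above. Since $G$ is finite we have $\dim(G)=0$, so that characterization says $\ed(G;p)$ equals the minimum of $\dim(X')$ over all $p$-versal generically free $G$-varieties $X'$; it therefore suffices to verify that the smooth locus $X^{\mathrm{sm}}\subseteq X$ of $X$ over $F$ is one such variety, since $\dim(X^{\mathrm{sm}})=\dim(X)$.

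First I would check that $X^{\mathrm{sm}}$ is an admissible $G$-variety. The $G$-action on $X$ is by $F$-automorphisms, and the smooth locus of $X$ over $F$ is stable under $F$-automorphisms, so $X^{\mathrm{sm}}$ is a $G$-invariant open subscheme; it is non-empty, since it contains the given smooth $F$-point $x_{0}$, hence dense because $X$ is irreducible. Thus $X^{\mathrm{sm}}$ is again a geometrically irreducible $G$-variety, now smooth over $F$, and $x_{0}$ is still a smooth $G$-fixed $F$-point on it. It is moreover generically free: $X$ itself is generically free, since a faithful action of a finite smooth group on a geometrically irreducible variety is — over $\overline{F}$ the fixed locus $X_{\overline{F}}^{\,g}$ of a non-identity $g\in G(\overline{F})$ is the proper closed equalizer of $g$ and $\id$, so on the complement of the finite union $\bigcup_{g\neq 1}X_{\overline{F}}^{\,g}$ every geometric stabilizer is trivial, and trivial as a group scheme because closed subgroup schemes of the \'etale group $G$ are \'etale — and every $G$-invariant dense open subvariety of a generically free $G$-variety is again generically free.

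It remains to see that $X^{\mathrm{sm}}$ is $p$-versal. By fact~(i), $X^{\mathrm{sm}}$ is weakly versal, since it carries the $G$-fixed $F$-point $x_{0}$. Every $p$-closed field is infinite (a finite field has extensions of every degree), so weak versality implies weak $p$-versality, and hence $X^{\mathrm{sm}}$ is weakly $p$-versal. As $X^{\mathrm{sm}}$ is smooth and geometrically irreducible, fact~(ii) upgrades this to $p$-versality. Thus $X^{\mathrm{sm}}$ is a $p$-versal generically free $G$-variety, and $\ed(G;p)\leqslant\dim(X^{\mathrm{sm}})-\dim(G)=\dim(X)$.

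I expect no real obstacle internal to this corollary: essentially all of its content lies in facts~(i) and especially~(ii), the latter being a genuine descent statement over $p$-closed fields that we are free to use. The one point requiring care is not to apply~(ii) to $X$ directly — $X$ need not be smooth, and over an imperfect field could even be nowhere smooth — but to pass to $X^{\mathrm{sm}}$; here the hypothesis that $X$ has a smooth $F$-point is used twice, both to ensure $X^{\mathrm{sm}}\neq\emptyset$ and to keep a $G$-fixed point available for fact~(i).
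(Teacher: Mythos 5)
Your proof is correct and is exactly the argument the paper intends: the paper merely says ``combining (i) and (ii)'' and cites \cite[Corollary 8.6(b)]{dr}, and your write-up is that combination spelled out in full, with the necessary passage to the smooth locus, the check of generic freeness for a faithful action of a finite \'etale group, and the observation that $p$-closed fields are infinite all handled correctly.
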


If we replace ``$p$-versal" by ``versal", then (ii) fails: a weakly versal $G$-variety does not need to be versal.
This is the underlying reason why both Proposition~\ref{prop8.6} and Theorem~\ref{thm.main} fail if $\ed(G; p)$ is replaced by $\ed(G)$.

\section{Proof of Theorem~\ref{thm.main}}
\label{sect.proof}

In this section we will prove Theorem~\ref{thm.main}, assuming Lemmas~\ref{lem.curve} and~\ref{lem.twisted} below.
We will defer the proofs of these lemmas to sections~\ref{sect.lem.curve} and~\ref{sect.twisted}, respectively.

By \cite[Lemma 4.1]{mr10}, if 
$G' \subset G$ is a subgroup of index prime to $p$, then
\begin{equation} \label{e.G'}
\ed_F(G; p) = \ed_F(G'; p).
\end{equation} 
In particular, if $p$ does not divide $|G|$, then taking $G' = \{ 1 \}$,
we conclude that  $\ed_{F}(G; p) = 0$.
On the other hand, if $p$ divides $|G|$, then 
$\ed_{F}(G; p) \geqslant 1$; see \cite[Proposition 4.4]{merkurjev09} or~\cite[Lemma 10.1]{lmmr}.
Our goal is thus to show that $\ed_F(G; p) \leqslant 1$. 

First let us consider the case where $G$ is a finite group,
viewed as a constant algebraic group over $F$. After replacing $G$ by a Sylow $p$-subgroup, 
we may assume that $G$ is a $p$-group. Moreover, since $\bbF_p \subset F$, 
$\ed_F(G; p) \leqslant \ed_{\bbF_p}(G; p)$. Thus, for the purpose of proving the inequality
$\ed_F(G; p) \leqslant 1$, we may assume that $F = \bbF_p$. In view of Proposition~\ref{prop8.6} it suffices to prove the following.

\begin{lemma} \label{lem.curve} For every finite constant $p$-group $G$ there exists a faithful $G$-curve defined over $\bbF_p$
with a smooth $G$-fixed $\FF_p$-point.
\end{lemma}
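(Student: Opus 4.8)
The plan is to realize $G$ as a group of automorphisms of a complete smooth curve over $\FF_p$ that fixes a rational point, using the fact that the automorphism group of the formal disk (the Nottingham group) over $\FF_p$ contains every finite $p$-group. Concretely, the Nottingham group $\cN(\FF_p)$ is the group of continuous $\FF_p$-algebra automorphisms of $\FF_p[[t]]$ that are the identity modulo $t^2$; by a theorem of Leedham-Green and Weiss (see also Camina), every finite $p$-group embeds into $\cN(\FF_p)$. Fix such an embedding $G \into \cN(\FF_p)$, which gives a faithful action of $G$ on $\Spec \FF_p[[t]]$ fixing the closed point and acting trivially on the cotangent space there.

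Next I would globalize this formal action to an action on an actual curve. The cleanest route is to start from the projective line $\bbP^1_{\FF_p}$ with its distinguished point $0$, whose completed local ring is $\FF_p[[t]]$, and then pass to a suitable cover. The $G$-action on $\FF_p((t))$ is an action by $\FF_p$-automorphisms of a local field of characteristic $p$; one can choose the embedding so that the fixed field $\FF_p((t))^G$ is again of the form $\FF_p((s))$ (e.g. by the Artin--Schreier/ramification structure, or simply because any finite extension of $\FF_p((s))$ of the given shape works), and then $\FF_p((t))/\FF_p((s))$ is a totally ramified $G$-Galois extension of local fields. I would then realize this extension geometrically: take a $G$-cover $C \to \bbP^1_{\FF_p}$ (branched only over $0$ and possibly $\infty$, using Katz--Gabber or a direct Artin--Schreier--Witt construction) whose completion at a point above $0$ reproduces the chosen local extension. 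Then $C$ is a smooth geometrically irreducible $G$-curve over $\FF_p$, the point $P$ above $0$ is $\FF_p$-rational and $G$-fixed, and $C$ is smooth at $P$ since $C$ is a smooth curve. Faithfulness of the $G$-action on $C$ follows from faithfulness on the completed local ring at $P$, which is guaranteed by the Nottingham embedding.

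I expect the main obstacle to be the globalization step: producing a genuine projective curve over $\FF_p$, rather than just a formal or local object, carrying a faithful $G$-action with a rational fixed point. The formal action from the Nottingham group is easy to write down, but extending it to an honest curve requires either invoking a Katz--Gabber type theorem (every action on the formal punctured disk extends to a branched $G$-cover of $\bbP^1$ unramified away from one point) or explicitly building the cover via iterated Artin--Schreier--Witt extensions and checking that the ramification filtration matches. Once $C$ is in hand, verifying the remaining properties — geometric irreducibility, faithfulness, existence of a smooth $\FF_p$-rational $G$-fixed point — is routine. Combined with Proposition~\ref{prop8.6}, producing such a curve $C$ with $\dim C = 1$ immediately gives $\ed_{\FF_p}(G; p) \leqslant 1$, which is exactly what is needed.
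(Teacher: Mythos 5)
Your proposal follows the paper's own (first) proof essentially verbatim: embed $G$ into the Nottingham group via Leedham-Green--Weiss/Camina, then globalize the formal action to a smooth $G$-curve with a rational $G$-fixed point using the Harbater--Katz--Gabber theorem, with faithfulness coming from injectivity of the Nottingham embedding. The approach is correct and the same as the paper's.
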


Now consider the general case, where $G$ is a smooth finite algebraic group over $F$. In other words, $G = \, ^\tau \Gamma$, where $\Gamma$ is a constant finite group,
$A = \Aut_{\rm grp}(\Gamma)$ is the group of automorphisms of $\Gamma$ and 
$\tau$ is a cocycle representing a class in $H^1(F, A)$. 

\begin{lemma} \label{lem.twisted}
{\rm (a)} $\ed_F(G) \leqslant \ed_F(\Gamma \rtimes A)$, $\quad$ 
{\rm (b)} $\ed_F(G; p) \leqslant \ed(\Gamma \rtimes A; p)$.
\end{lemma}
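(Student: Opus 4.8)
The plan is to exhibit explicit maps on versal (or $p$-versal) objects that realize both inequalities simultaneously; parts (a) and (b) have the same proof, with ``versal'' replaced by ``$p$-versal'' and ``infinite field'' by ``$p$-closed field'' throughout. Write $G = {}^\tau\Gamma$, where $\tau \in H^1(F,A)$ and $A = \Aut_{\rm grp}(\Gamma)$. The key observation is that a $G$-torsor over a field $K/F$ is the same thing as a $(\Gamma \rtimes A)$-torsor over $K$ whose induced $A$-torsor (via the projection $\Gamma \rtimes A \to A$) is isomorphic to $\tau_K$. Indeed, twisting by $\tau$ gives a bijection between isomorphism classes of $G = {}^\tau\Gamma$-torsors and isomorphism classes of $\Gamma$-torsors ``over $\tau$'', i.e.\ reductions of structure group of $\tau$-twisted objects; more concretely, the fiber of $H^1(K, \Gamma \rtimes A) \to H^1(K,A)$ over $[\tau_K]$ is in natural bijection with $H^1(K, {}^\tau\Gamma) = H^1(K, G_K)$.

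With this dictionary in hand, I would argue as follows. Let $Y$ be a versal (respectively $p$-versal) generically free $(\Gamma \rtimes A)$-variety with $\dim(Y) = \ed_F(\Gamma \rtimes A)$ (respectively $\ed_F(\Gamma \rtimes A; p)$); such a $Y$ exists by the discussion in Section~\ref{sect.versality}. The projection $\Gamma \rtimes A \to A$ makes $Y$ into an $A$-variety as well, and I want to produce from $Y$ a $G$-variety of the same dimension that is versal (respectively $p$-versal) for $G$. The natural candidate is a suitable fiber of the quotient map $Y \to Y/\Gamma$ over the generic point of the locus where the $A$-action ``matches $\tau$'', or equivalently the $\tau$-twist ${}^\tau Y$ of $Y$ (using the $A$-action on $Y$ to twist by the cocycle $\tau$): this is a $G$-variety because $G = {}^\tau\Gamma$ acts on ${}^\tau Y$, it is generically free since $Y$ is, and $\dim({}^\tau Y) = \dim(Y)$ since twisting by a torsor is an fppf (indeed étale, as $A$ is finite) form and does not change dimension. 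It remains to check that ${}^\tau Y$ is versal (respectively $p$-versal) for $G$: given a $G$-torsor $T \to \Spec(E)$ with $E$ infinite (respectively $p$-closed), push it forward to the $(\Gamma \rtimes A)$-torsor $T \times^G (\Gamma \rtimes A) \to \Spec(E)$, use versality (respectively $p$-versality) of $Y$ to get a $(\Gamma \rtimes A)$-equivariant map $T \times^G (\Gamma \rtimes A) \to Y$ over $F$, and then untwist: restricting this map along $T \into T \times^G(\Gamma \rtimes A)$ and twisting by $\tau$ yields a $G$-equivariant $F$-morphism $T \to {}^\tau Y$. One has to be slightly careful that twisting is compatible with the equivariant-map condition, but this is formal once one phrases everything in terms of the groupoid of torsors. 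Feeding ${}^\tau Y$ into the characterization $\ed(G) = \min(\dim(X) - \dim(G))$ over versal generically free $X$ (and likewise at $p$) gives $\ed_F(G) \leqslant \dim({}^\tau Y) - \dim(G) = \dim(Y) - \dim(\Gamma \rtimes A) = \ed_F(\Gamma \rtimes A)$, since $\dim(G) = \dim(\Gamma \rtimes A) = 0$ as all these groups are finite; and similarly for (b).

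The main obstacle I anticipate is making the twisting construction rigorous at the level of \emph{varieties} rather than just torsors over fields: one needs $Y$ (or at least a $(\Gamma \rtimes A)$-invariant dense open subvariety of it) to descend properly after twisting by the \emph{cocycle} $\tau$, which lives over $\Spec(F)$, so that ${}^\tau Y$ is an honest $G$-variety over $F$ and not merely a variety over some extension. This is where smoothness of $G$ (and of $A$, hence étaleness of the twisting torsor) is used: twisting by an étale $A$-torsor over $F$ is an exact, dimension-preserving operation on quasi-projective $F$-schemes with $A$-action, and it carries generically free $(\Gamma \rtimes A)$-actions to generically free $G = {}^\tau\Gamma$-actions. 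A secondary point requiring care is the verification that the versality property is genuinely preserved and not just the weak-versality property — but here parts (i) and (ii) of Section~\ref{sect.versality} do the work for us in the $p$-versal case (b), since for smooth geometrically irreducible varieties weak $p$-versality already implies $p$-versality; for part (a) one instead checks directly that every $G$-invariant dense open of ${}^\tau Y$ comes from a $(\Gamma \rtimes A)$-invariant dense open of $Y$ after twisting, so weak versality of the latter transfers.
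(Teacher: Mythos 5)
Your core idea --- twist the minimal witness object by the cocycle $\tau$ and check that nothing essential changes --- is exactly the mechanism of the paper's proof, but you run it through a different characterization of essential dimension, and that choice costs you. The paper works with the compression picture: take a generically free linear representation $V$ of $\Gamma \rtimes A$ and a (p\nobreakdash-)compression $f\colon V \dasharrow X$ of minimal $\dim X$, twist everything by $\tau$, invoke Hilbert's Theorem 90 to see that ${}^{\tau}V$ is again a vector space with a linear $G$-action, and check faithfulness/dominance after base change to $\overline{F}$, where the twist trivializes. This makes the two delicate points of your version evaporate: there is no versality to transfer, and the "is the twisted object still of the right type" question is answered in one line by Hilbert 90. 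Your route through versal varieties has to confront precisely the issue the paper flags at the end of Section~\ref{sect.versality}, namely that weak versality does not imply versality in the non-$p$ setting, and your fix for part (a) is not quite right as stated: a $G$-invariant dense open $U \subseteq {}^{\tau}Y$ need not be the twist of a $(\Gamma \rtimes A)$-invariant open of $Y$, since over $\overline{F}$ it is only $\Gamma$-invariant, not $A$-invariant. It is repairable --- intersect the $A$-translates of $U_{\overline{F}}$ and check that the resulting $(\Gamma\rtimes A)$-invariant open descends to $F$ with respect to the \emph{untwisted} descent datum, then note that its twist is contained in $U$ --- but this is an extra argument you would need to write out. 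Similarly, your dictionary between $G$-torsors and $(\Gamma\rtimes A)$-torsors lying over $\tau$ is a surjection from $H^1(K,{}^{\tau}\Gamma)$ onto the fibre of $H^1(K,\Gamma\rtimes A)\to H^1(K,A)$, not in general a bijection; this does not hurt your inequality, but the "untwist the equivariant map" step that you defer as formal is really the crux and deserves a precise statement (the paper outsources the analogous point to \cite[Proposition 2.6(a)]{fr}).

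In short: the proposal is morally correct and rests on the same twisting idea, but as written it has a false intermediate claim in the versality transfer for part (a) and leaves the untwisting of equivariant maps unjustified. Switching to the compression characterization, as the paper does, lets Hilbert 90 and a base change to $\overline{F}$ absorb all of these difficulties.
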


The semidirect product $\Gamma \rtimes A$ is a constant finite group.
Hence, as we showed above, $\ed_F(\Gamma \rtimes A; p) \leqslant 1$. Theorem~\ref{thm.main} now follows from Lemma~\ref{lem.twisted}(b).

\section{Proof of Lemma~\ref{lem.curve}}
\label{sect.lem.curve}

We will give two proofs: our original proof, extracted from the literature, and a self-contained proof suggested to us by the referee.

\begin{proof}
Recall that the Nottingham group $\Aut_0(\FF_p[[t]])$ is the group of automorphisms $\sigma$ of the algebra $\FF_p[[t]]$ of formal power series
such that $\sigma(t) = t + a_2 t^2 + a_2 t^3 + \ldots$, for some $a_2, a_3, \ldots \in \FF_p$.
By a theorem of of Leedham-Green and Weiss~\cite[Theorem 3]{camina}, every finite $p$-group $G$ embeds into $\Aut_0(\FF_p[[t]])$.
Fix an embedding $\phi \colon G \hookrightarrow \Aut_0(\FF_p[[t]])$.
By~\cite[Theorem 1.4.1]{katz}, there exists a smooth $G$-curve $X$ over $\FF_p$,
with an $\FF_p$-point $x \in X$ fixed by $G$, such that the $G$-action in the formal neighborhood of $x$ is given by $\phi$; 
see also~\cite[Section 2]{harbater} and~\cite[Theorem 4.8]{bcps}. Since $\phi$ is injective, the $G$-action 
on $X$ is faithful. 
\end{proof}

\begin{proof}[Alternative proof]
First consider the case, where $G = (\ZZ/ p \ZZ)^n$ is an elementary abelian $p$-group. Here we can construct $X$ 
as the cover of $\bbP^1$ (with function field $\FF_p(s)$) given by the compositum of $n$ linearly disjoint
Artin-Schreier extensions $\FF_p(s, t_i)/\FF_p(s)$ given by $t_i^p - t_i = f_i(s)$ (e.g., taking $f_i(s) = s^{pi + 1}$). 

Now consider a general finite $p$-group $G$. Denote the Frattini subgroup of $G$ by $\Phi$ and the quotient $G/\Phi$ by $(\ZZ/ p\ZZ)^n$.
Let $Y$ be the smooth curve and $Y \to \bbP^1$ be a $G/\Phi = (\ZZ/ p \ZZ)^n$-cover constructed in the previous paragraph, totally
ramified at a point $y \in Y(\FF_p)$ above $\infty \in \PP^1$. 
Let $E/\FF_p(s)$ be the $(\ZZ/ p \ZZ)^n$-Galois extension associated to this cover. By~\cite[Proposition II.2.2.3]{serre-gc},
the cohomological dimension of
$\FF_p(s)$ at $p$ is $\leqslant 1$. Consequently by~\cite[Propositions I.3.4.16]{serre-gc},
$E/\FF_p(s)$ lifts to a $G$-Galois extension $K/\FF_p(s)$ such that $K^{\Phi} = E$. Let $X$ be the smooth curve associated to $K$ and 
$x \in X(\overline{\FF_p})$ is a point above $y$:
\[ \xymatrix{ x  \ar@{->}[d]  &\in  & X \ar@{->}[d]   \\
              y  \ar@{->}[d]  & \in   & Y \ar@{->}[d]  \\
              \infty & \in &   \bbP_{\FF_p}^1.}
\]
We claim that $x$ is fixed by $G$; in particular, this will imply that $x \in X(\FF_p)$. Let $H$ be the stabilizer of $x$ in $G$. Since
$\Phi$ acts transitively on the fiber above $y$ in $X$, we have $\Phi \cdot H = G$. By Frattini's theorem 
(see, e.g.,~\cite[Theorem 5.2.12]{robinson}), $\Phi$ is the set of non-generators of $G$. We conclude that $H = G$,
as claimed. 
\end{proof}

\section{Proof of Lemma~\ref{lem.twisted}}
\label{sect.twisted}

We will make use of the following description of $\ed_F(G)$ and $\ed_F(G; p)$ in the case where $G$ is a finite 
algebraic group over $F$. Let $G \to \GL(V)$ be a faithful representation. A compression (respectively, a $p$-compression) of $V$ is a dominant $G$-equivariant rational map $V \dasharrow X$ (respectively, a dominant $G$-equivariant correspondence $V \rightsquigarrow X$ of degree prime to $p$),
where $G$ acts faithfully on $X$. Here by a correspondence we mean a $G$-equivariant subvariety $V'$ of $V \times X$ such that the $G$ transitively permutes the
irreducible components of $V'$, and the dimension of each component equals the dimension of $V$. The degree of this correspondence is defined as the degree of the projection $V' \to V$ to the first factor.

Recall that $\ed_F(G)$ (respectively, $\ed_F(G; p)$) equals the minimal value of $\dim(X)$ taken 
over all compressions $V \dasharrow X$ (respectively all $p$-compressions $V \rightsquigarrow X$). In particular,
these numbers depend only on $G$ and $F$ and not on the choice of the generically free 
representation $V$. For details, see~\cite{icm}. 

We are now ready to proceed with the proof of Lemma~\ref{lem.twisted}. To prove part (a),
let $V$ be a generically free representation of $\Gamma \rtimes A$ and let $f \colon V \dasharrow X$ be 
a $\Gamma \rtimes A$-compression, with
$X$ of minimal possible dimension. That is, $\dim_F(X) = \ed_F(\Gamma \rtimes A$). 
Twisting by $\tau$, we obtain a $G = \, ^{\tau} \Gamma$-equivariant map $^\tau f \colon ^{\tau} V \dasharrow \, ^{\tau} X$; see e.g., \cite[Proposition 2.6(a)]{fr}. 
Now observe that by Hilbert's Theorem 90, $^\tau V$ is a vector space with a linear action 
of $G = \, ^\tau \Gamma$ and  $^\tau f \colon \, ^{\tau} 
V \dasharrow \, ^{\tau} X$ is a compression. (To see that
the $G$-action on $^\tau V$ and $^\tau X$ are faithful, we may
pass to the algebraic closure $\overline{F}$ of $F$. 
Over $\overline{F}$, $\tau$ is split, so that $G = \Gamma$, 
$^\tau V = V$, $^\tau X = X$ 
and $^\tau f = f$, and it becomes obvious that the $G$-actions 
on $^{\tau} V$ and $^{\tau} X$ are faithful.) 
We conclude that $\ed_F(G) \leqslant \dim_F(\, ^\tau X) = \dim_F(X) = 
\ed_F(\Gamma \rtimes A)$, as desired.

The proof of part (b) proceeds along the same lines. The starting point is a $p$-compression 
$f\colon  V \rightsquigarrow X$ with $X$ of minimal possible dimension, $\dim_F(X) = \ed_F(\Gamma \rtimes A; p)$. 
We twist $f$ by $\tau$ to obtain a $p$-compression $^\tau f \colon \, ^{\tau} 
V \rightsquigarrow \, ^{\tau} X$ of the linear action of $G = \, ^\tau \Gamma$ on $^{\tau} V$.
The rest of the argument is the same as in part (a). 
This completes the proof of Lemma~\ref{lem.twisted} and thus of Theorem~\ref{thm.main}.
\qed

\section{An application}
\label{sect.application}

In this section $G$ will denote a connected reductive linear algebraic group over a field $F$. It is shown 
in~\cite[Theorem 1.1(c)]{cgr1} that there exists a finite $F$-subgroup $S \subset G$ such that 
every $G$-torsor over every field $K/F$ admits reduction of structure to $S$; see also \cite[Corollary 1.4]{cgr2}.
In other words, the map $H^1(K, S) \to H^1(K, G)$ is surjective for every field $K$ containing $F$. If this happens, we will
say that ``$G$ admits reduction of structure to $S$\,''.

We will now use Theorem~\ref{thm.main} to show that if $\Char(F) = p > 0$ and $p$ is a torsion prime for $G$, 
then $S$ cannot be smooth. For the definition of torsion primes,
a discussion of their properties and further references, 
see~\cite{serre2}. Note that
by a theorem of A.~Grothendieck~\cite{grothendieck},
if $G$ is not special (i.e., if $H^1(K, G) \neq \{ 1 \}$ for 
some field $K$ containing $F$), then $G$ has at least one torsion prime; 
see also~\cite[1.5.1]{serre2}. 

\begin{corollary} \label{cor.main}
Let $G$ be a connected reductive linear algebraic group over an algebraically closed 
field $F$ of characteristic $p > 0$. 

(a) If $S$ is a smooth finite subgroup of $G$ defined over $F$, then 
the natural map
   \[
   f_{K}\colon H^1(K, S) \to H^1(K, G)
   \]
is trivial for any $p$-closed field $K$ containing $F$. In other words,
$f_K$ sends every $\alpha \in H^1(K, S)$ to $1 \in H^1(K, G)$.

(b) If $p$ is a torsion prime for $G$, then $G$ does not admit 
reduction of structure to any smooth finite subgroup.
\end{corollary}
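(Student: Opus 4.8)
\textbf{Proof proposal for Corollary~\ref{cor.main}.}

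The plan is to deduce both parts from Theorem~\ref{thm.main} together with the description of essential dimension at $p$ in terms of versal torsors. For part (a), fix a $p$-closed field $K$ containing $F$ and a class $\alpha \in H^1(K, S)$, represented by an $S$-torsor $\tau\colon T \to \Spec(K)$. Since $S$ is a smooth finite algebraic group over $F$, Theorem~\ref{thm.main} gives $\ed_F(S; p) \leqslant 1$, so $\tau$ descends to an intermediate field $F \subset K_0 \subset K$ with $\trdeg(K_0/F) \leqslant 1$; say $\tau$ is the pullback of an $S$-torsor $\tau_0 \to \Spec(K_0)$. The point now is that $K_0$ is (contained in) the function field of a curve over $F$, and $F$ is algebraically closed of characteristic $p$, so $\Spec(K_0)$ has cohomological dimension $\leqslant 1$: by Tsen's theorem $F$ is $C_1$, hence $K_0$ is $C_1$, and therefore $\cd(K_0) \leqslant 1$. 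By Steinberg's theorem (Serre's Conjecture~I, proved by Steinberg), a connected linear algebraic group over a perfect field of cohomological dimension $\leqslant 1$ has trivial $H^1$; applying this to $G$ over $K_0$ (note $K_0$ is perfect since $F$ is and $\trdeg \leqslant 1$ — more carefully, one passes to the perfect closure, which does not change $\cd$ or kill the argument) we get $H^1(K_0, G) = \{1\}$. Hence the image of $\alpha_0 = [\tau_0]$ under $H^1(K_0, S) \to H^1(K_0, G)$ is trivial, and by functoriality its pullback $f_K(\alpha) \in H^1(K, G)$ is trivial as well.

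For part (b), argue by contradiction: suppose $G$ admits reduction of structure to some smooth finite subgroup $S \subset G$, so that $H^1(K, S) \to H^1(K, G)$ is surjective for every field $K/F$. Restricting to $p$-closed fields $K$ and combining with part (a), we conclude that $H^1(K, G) = \{1\}$ for every $p$-closed field $K$ containing $F$. On the other hand, since $p$ is a torsion prime for $G$, there is a field extension $K/F$ and a nontrivial class in $H^1(K, G)$ that remains nontrivial after passing to a $p$-closed field — concretely, the torsion-prime hypothesis produces nontrivial $p$-torsion in the relevant cohomology, which survives restriction to the maximal $p$-extension (equivalently, to a $p$-closure $K^{(p)}$ of $K$, whose absolute Galois group is a pro-$p$ group); see~\cite{serre2}. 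This contradicts $H^1(K^{(p)}, G) = \{1\}$, completing the proof.

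The main obstacle is the step connecting the torsion-prime hypothesis to the existence of a $p$-closed field $K$ with $H^1(K, G) \neq \{1\}$: one must recall precisely what it means for $p$ to be a torsion prime and check that the associated nontrivial torsor does not become trivial upon restriction to a $p$-closed field. The cleanest route is to use the characterization of torsion primes in terms of the existence of a nontrivial $p$-primary torsor over some field, together with the fact that $H^1(-, G)$ detects such classes after passing to the pro-$p$ closure; this is where one leans on the discussion in~\cite{serre2}. The remaining ingredients — Theorem~\ref{thm.main}, Tsen's theorem, and Steinberg's vanishing theorem for $H^1$ over fields of cohomological dimension $\leqslant 1$ — are invoked essentially as black boxes, and the functoriality bookkeeping in part (a) is routine.
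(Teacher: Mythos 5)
Your proof is correct and follows essentially the same route as the paper: descend the $S$-torsor to a field $K_0$ with $\trdeg(K_0/F)\leqslant 1$ via Theorem~\ref{thm.main}, kill $H^1(K_0,G)$ by Serre's Conjecture~I, and pull back; part (b) is the paper's argument in contrapositive form, with the key input (a torsion prime forces $H^1(K,G)\neq\{1\}$ for some $p$-closed $K$, cited in the paper from \cite[Proposition 4.4]{merkurjev09}) taken as a black box in both. The only wrinkle is your parenthetical claim that $K_0$ is perfect, which is false in general (e.g.\ $K_0=F(t)$); your fallback of passing to the perfect closure does work here because $G$ is smooth, so $H^1$ is insensitive to purely inseparable extensions, whereas the paper sidesteps the issue by invoking the Borel--Springer version of Steinberg's theorem, valid for arbitrary fields of dimension $\leqslant 1$.
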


\begin{proof} (a) Let $\alpha \in H^1(K, S)$ and 
$\beta = f_K(\alpha) \in H^1(K, G)$. By Theorem~\ref{thm.main}, 
$\alpha$ descends to $\alpha_0 \in H^1(K_{0}, S)$ for some intermediate field
$F \subset K_0 \subset K$, where $\trdeg(K_0/F) \leqslant 1$.
Since $F$ is algebraically closed, $\dim(K_0)\leqslant 1$;
see~\cite[Sections II.3.1-3]{serre-gc}. By Serre's Conjecture I
(proved by R.~Steinberg~\cite{steinberg} for a perfect field $K_0$ 
and by A.~Borel and T.~A.~Springer~\cite[\S 8.6]{bs} for an arbitrary
$K_0$ of dimension $\leqslant 1$), 
$H^1(K_0, G) = \{ 1 \}$.
Tracing through the diagram
 \[ \xymatrix{    
H^1(K_0, S) \ar@{->}[rrr]^{f_{K_0}} \ar@{->}[ddd] &   &   & H^1(K_0, G) = \{ 1 \} 
\ar@{->}[ddd] \\
                 & \alpha_0 \ar@{|->}[d] \ar@{|->}[r]  & 1 
\ar@{|->}[d] &  \\
                 & \alpha \ar@{|->}[r] & \beta     &               \\
H^1(K, S) \ar@{->}[rrr]^{f_K}  &   &   &  H^1(K, G), }  \]
we see that $\beta = 1$, as desired.

\smallskip
(b) If $p$ is a torsion prime for $G$, then $H^1(K, G) \neq \{ 1 \}$ for some $p$-closed field $K$ containing $F$; see~\cite[Proposition 4.4]{merkurjev09}.
In view of part (a), this implies that $f_K$ is not surjective.
\end{proof}

%


\begin{thebibliography}{LMMR13}

\bibitem[BCPS17]{bcps}
F.~M.~Bleher, T. Chinburg, B.~Poonen, P.~Symonds,
{\em Automorphisms of Harbater-Katz-Gabber curves}, Math. Ann. {\bf 368} (2017), 
no.~1-2, 811--836. MR3651589

\bibitem[BS68]{bs}
A.~Borel\ and\ T.~A.~Springer, {\em Rationality properties of linear algebraic groups. II}, T\^ohoku Math. J. (2) {\bf 20} (1968), 443--497. MR0244259

\bibitem[CGR06]{cgr1} 
V. Chernousov, P. Gille\ and\ Z. Reichstein, {\em Resolving $G$-torsors by abelian base extensions}, J. Algebra {\bf 296} (2006), no.~2, 561--581. MR2201056

\bibitem[CGR08]{cgr2}
V. Chernousov, P. Gille\ and\ Z. Reichstein, {\em Reduction of structure for torsors over semilocal rings}, Manuscripta Math. {\bf 126} (2008), no.~4, 465--480. MR2425436

\bibitem[C97]{camina}
R.~Camina, {\em Subgroups of the Nottingham group}, J.~Algebra {\bf 196} (1997), no.~1, 101--113. MR1474165

\bibitem[CGR06]{cgr}
V.~Chernousov, P.~Gille\ and\ Z.~Reichstein, {\em Resolving $G$-torsors by abelian 
base extensions}, J.~Algebra {\bf 296} (2006), no.~2, 561--581. MR2201056

\bibitem[DR15]{dr}
A.~Duncan\ and\ Z.~Reichstein, {\em Versality of algebraic group actions and rational points on twisted varieties}, J. Algebraic Geom. {\bf 24} (2015), no.~3, 499--530. MR3344763

\bibitem[FR17]{fr}
M.~Florence, Z.~Reichstein, {\em The rationality problem for forms of moduli spaces of stable marked curves of positive genus}, arXiv:1709.05696.

\bibitem[Gr58]{grothendieck}
A.~Grothendieck, {\em Torsion homologique et sections rationnelles}, in: 
\emph{Anneaux de Chow et Applications}
S\'eminaire Claude Chevalley, {\bf 3} (1958), expos\'e 5, 1--29.

\bibitem[Ha80]{harbater}
D.~Harbater, {\em Moduli of $p$-covers of curves}, Comm. Algebra {\bf 8} (1980), no.~12, 1095--1122. MR0579791

\bibitem[KM08]{km}
N.~A.~Karpenko\ and\ A.~S.~Merkurjev, {\em Essential dimension of finite $p$-groups}, Invent. Math. {\bf 172} (2008), no.~3, 491--508. MR2393078

\bibitem[Ka86]{katz}
N.~M.~Katz, {\em Local-to-global extensions of representations of fundamental groups}, Ann. Inst. Fourier (Grenoble) {\bf 36} (1986), no.~4, 69--106. MR0867916

\bibitem[Le04]{ledet-p}
A.~Ledet, On the essential dimension of $p$-groups, in {\it Galois theory and modular forms}, 159--172, Dev. Math., 11, Kluwer Acad. Publ., Boston, MA, 2004. MR2059762

\bibitem[Le07]{ledet1}
A. Ledet, {\em Finite groups of essential dimension one}, J. Algebra {\bf 311} (2007), no.~1, 31--37. MR2309876

\bibitem[LMMR13]{lmmr}
R.~L\"otscher, M.~MacDonald, A.~Meyer, and Z.~Reichstein,
{\em Essential $p$-dimension of algebraic groups whose connected component 
is a torus}, Algebra Number Theory {\bf 7} (2013), no.~8, 1817--1840. MR3134035

\bibitem[Me09]{merkurjev09}
A.~S.~Merkurjev, Essential dimension, in {\it Quadratic forms---algebra, arithmetic, and geometry}, 299--325, Contemp. Math., 493, Amer. Math. Soc., Providence, RI, 2009. MR2537108

\bibitem[Me13]{merkurjev13}
A.~S.~Merkurjev, {\em Essential dimension: a survey}, 
 Transform. Groups, 18 (2013), no.~2, 415--481, 2013.

 \bibitem[MR09]{mr09} A.~Meyer and Z.~Reichstein,
 {\em The essential dimension of the normalizer of a maximal torus in the
  projective linear group}, Algebra Number Theory 3, no. 4 (2009), 467--487.

\bibitem[MR10]{mr10}
A. Meyer\ and\ Z. Reichstein, {\em Some consequences of the 
Karpenko-Merkurjev theorem}, Doc. Math. {\bf 2010}, Extra vol.: Andrei A. Suslin sixtieth birthday, 445--457. MR2804261

\bibitem[Rei10]{icm}
Z.~Reichstein, Essential dimension, 
in {\it Proceedings of the International Congress of Mathematicians. 
Volume II}, 162--188, Hindustan Book Agency, New Delhi, 2010.

\bibitem[Ro96]{robinson}
D. J. S. Robinson, {\it A course in the theory of groups}, second edition, Graduate Texts in Mathematics, 80, Springer-Verlag, New York, 1996. MR1357169

\bibitem[Se97]{serre-gc}
J.-P.~Serre, {\it Galois cohomology}, translated from the French by Patrick Ion and revised by the author, Springer-Verlag, Berlin, 1997. MR1466966

\bibitem[Se00]{serre2}
J.-P.~Serre, {\em Sous-groupes finis des groupes de Lie}, Ast\'erisque No. 266 (2000), Exp.\ No.\ 864, 5, 415--430. MR1772682

\bibitem[Se03]{gms}
J.-P.~Serre, {\em Cohomological invariants, {W}itt invariants, and trace forms}, 
in Cohomological invariants in {G}alois cohomology, {\bf 28}
   Univ. Lecture Ser., pp. 1--100. Amer. Math. Soc., Providence, RI, 2003. Notes by Skip Garibaldi.

\bibitem[St65]{steinberg}
R. Steinberg, {\em Regular elements of semisimple algebraic groups}, Inst. Hautes \'Etudes Sci. Publ. Math., no. 25 (1965), 49--80. MR0180554
 
\bibitem[TV13]{tv}
D. Tossici\ and\ A. Vistoli, {\em On the essential dimension of infinitesimal group schemes}, 
Amer. J. Math. {\bf 135} (2013), no.~1, 103--114. MR3022958
\end{thebibliography}
\end{document}